\tikzset{snake it/.style={decorate, decoration=snake}}
\newtheorem{definition}{Definition}[section]
\newtheorem{lemma}[definition]{Lemma}
\newtheorem{theorem}[definition]{Theorem}
\newtheorem{claim}{Claim}
\newtheorem{prop}[definition]{Proposition}
\newtheorem{corollary}[definition]{Corollary}
\newtheorem{question}[definition]{Question}
\newtheorem{problem}[definition]{Problem}
\newtheorem{remark}{Remark}
\newcommand{\mb}{\partial_*}
\newcommand{\N}{\mathbb{N}}
\newcommand{\Z}{\mathbb{Z}}
\newcommand{\mc}{\mathcal}
\theoremstyle{plain}
\begin{document}
\title{(Non-)existence of Cannon--Thurston maps for Morse boundaries}

\author[R. Charney]{Ruth Charney}
    \address{(Ruth Charney) Department of Mathematics, Brandeis University, Waltham, MA, USA}
    \email{charney@brandeis.edu}

\author[M. Cordes]{Matthew Cordes}
    \address{(Matthew Cordes) Department of Mathematics, Heriot-Watt University and Maxwell Institute for Mathematical Sciences, Edinburgh, UK}
    \email{m.cordes@hw.ac.uk}

\author[A. Goldsborough]{Antoine Goldsborough}
    \address{(Antoine Goldsborough) Department of Mathematics, Heriot-Watt University and Maxwell Institute for Mathematical Sciences, Edinburgh, UK}
    \email{antoinegold10@gmail.com}

\author[A. Sisto]{Alessandro Sisto}
    \address{(Alessandro Sisto) Department of Mathematics, Heriot-Watt University and Maxwell Institute for Mathematical Sciences, Edinburgh, UK}
    \email{a.sisto@hw.ac.uk}

\author[S. Zbinden]{Stefanie Zbinden}
    \address{(Stefanie Zbinden) Department of Mathematics, Heriot-Watt University and Maxwell Institute for Mathematical Sciences, Edinburgh, UK}
    \email{sz2020@hw.ac.uk}

\begin{abstract}
    We show that the Morse boundary exhibits interesting examples of both the existence and non-existence of Cannon--Thurston maps for normal subgroups, in contrast with the hyperbolic case.
\end{abstract}

\maketitle

\section{Introduction}

Cannon--Thurston maps, named after the influential work of Cannon and Thurston in \cite{cannonthurston}, are maps induced by the inclusion of a (traditionally infinite index normal) subgroup $H$ of a group $G$ at the level of boundaries $\partial H \to \partial G$. The most well-studied case is where $H$ and $G$ are hyperbolic, and the boundaries here are the Gromov boundaries. In particular Mitra (Mj) \cite{mitra}, proved that Cannon--Thurston maps exist for any normal hyperbolic subgroup of a hyperbolic group. Cannon--Thurston maps have also been considered in the case of relatively hyperbolic groups and their Bowditch boundary \cite{Pal, Bowditch, Mj}, as well as CAT(0) groups and their visual boundaries \cite{CAT0IFP}.

The aim of this paper is to not restrict the class of groups under consideration, in which case one should then consider their Morse boundaries. Morse boundaries were introduced in \cite{charneysultan, cordes} as quasi-isometry invariants of finitely generated groups, and the Morse boundary of a hyperbolic group coincides with its Gromov boundary. We believe it is interesting to study Cannon--Thurston maps in this context, as this is a way of investigating what Morse directions of a subgroup can look like in an ambient group---in particular, in this paper all the subgroups are distorted. Cannon--Thurston maps in this context have been only touched upon in \cite[Proposition 5.5]{CAT0IFP} and \cite[Section 6]{CRSZ}, both of which provide examples of non-existence of Cannon--Thurston maps when the subgroup under consideration is hyperbolic.

In this paper we show that the Morse boundary exhibits interesting examples of both the existence and non-existence of Cannon--Thurston maps for normal subgroups, in contrast with the hyperbolic case.

Regarding non-existence, a case in which a subgroup $H<G$ trivially does not admit a Cannon-Thurston map is the case where $\mb H$ is non-empty, while $\mb G$ is empty; this is the case for $G=H\times \mathbb Z$ for instance. We give a non-trivial example where a Cannon--Thurston map does not exist, where the non-triviality is made precise by the first condition:

\begin{theorem}
\label{thm:main1}
    There are infinitely many automorphisms $\varphi$ of $H=F_2\ast \mathbb Z^2$ such that for $G = H \rtimes_{\varphi} \mathbb Z$ the following hold.
    \begin{enumerate}
        \item (Type-preserving) Every Morse element of $H$ is also a Morse element of $G$, but
        \item (No Cannon-Thurston map) there does not exist a continuous $H$--equivariant map $f :\mb  H \to \mb G$.
    \end{enumerate}
\end{theorem}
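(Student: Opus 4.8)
The plan is to build $G$ from a carefully chosen automorphism and to obstruct the Cannon--Thurston map using the distortion that the $\Z$--suspension forces on $H$. Write $H=F_2\ast\Z^2$ with $F_2=\langle a,b\rangle$ and $\Z^2=\langle x,y\rangle$. I would take $\varphi$ to be an automorphism that is \emph{atoroidal on the Morse part} --- i.e.\ no conjugacy class of a Morse element of $H$ is $\varphi$--periodic --- while acting as the identity on the free factor $\Z^2$, so that $\varphi$ genuinely mixes the expanding $F_2$--dynamics with the flat. Concretely one composes a fully irreducible automorphism of $F_2$ with transvections pushing $F_2$--generators across $\Z^2$; varying the $F_2$--part over infinitely many pairwise non-conjugate fully irreducible automorphisms produces the promised infinite family.

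For part (1), I would first identify the Morse elements of $H$ as exactly those not conjugate into $\Z^2$, i.e.\ the elements acting loxodromically on the Bass--Serre tree of the free product. The key structural input is that $G=(F_2\ast\Z^2)\rtimes_\varphi\Z$ is hyperbolic relative to the suspension of the peripheral flat, namely (conjugates of) $\langle\Z^2,t\rangle\cong\Z^3$, where $t$ is the stable letter; this is where atoroidality is used, to rule out extra parabolic or periodic behaviour. Granting this, the Morse elements of $G$ are precisely the non-peripheral loxodromics, and any Morse $g\in H$ is non-peripheral in $G$ --- its only route into a peripheral would be to lie in a conjugate of $\Z^2$, which is excluded --- hence $g$ is Morse in $G$. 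This yields the type-preserving statement.

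For part (2), suppose toward a contradiction that a continuous $H$--equivariant $f\colon\mb H\to\mb G$ exists. I would first pin $f$ down on the fixed points of Morse elements: if $g\in H$ is Morse then $g$ acts on both $\mb H$ and $\mb G$ (by part (1)) with north--south dynamics, and $H$--equivariance together with continuity forces $f(g^{+\infty})=g^{+\infty}$ and $f(g^{-\infty})=g^{-\infty}$, where the right-hand sides are the $G$--endpoints. The contradiction then comes from playing the dynamics of $\varphi$ on $\mb H$ against the non-Morseness of $t$ in $G$: because $\varphi$ fixes $\Z^2$ pointwise, $t$ commutes with $\Z^2$ and so $\langle t\rangle$ lies in the flat $\Z^3$, whence $t$ is \emph{not} Morse in $G$ and has no north--south dynamics on $\mb G$ --- in sharp contrast with the hyperbolic ($\Z^2$--free) case, where the analogous suspension is hyperbolic and the map exists by Mitra's theorem. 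Using that $\varphi$ has an attracting fixed point $\xi_+\in\mb H$ (its attracting lamination) with $\varphi^k(g)^{+\infty}\to\xi_+$ for every Morse $g$, I would produce two sequences of Morse fixed points converging to $\xi_+$ in $\mb H$ whose images $t^k\cdot g^{+\infty}$ in $\mb G$ fail to share a limit (or fail to converge at all), contradicting continuity of $f$ at $\xi_+$.

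The main obstacle is exactly this last step: controlling convergence in $\mb G$. Since left translation by $t^k$ is an isometry it preserves Morse gauges, so the candidate images all lie in a single Morse stratum and one cannot argue non-convergence by a naive gauge blow-up; instead one must analyse the actual position of the translated rays relative to the peripheral flat and show that receding toward the non-Morse direction $t^{+\infty}$ prevents the two sequences from being asymptotic in $\mb G$. Establishing this --- together with the relative hyperbolicity of $G$ and the identification of its Morse elements used in part (1) --- is where the real work lies; the remaining ingredients (the infinitude of $\varphi$, the pinning-down of $f$, and the north--south dynamics of $\varphi$ on $\mb H$) are comparatively routine.
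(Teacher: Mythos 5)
Your part (1) and your choice of $\varphi$ are essentially the paper's (Corollary \ref{cor:no_CT_example}): take $\varphi|_{F_2}$ atoroidal, $\varphi|_{\Z^2}$ the identity, observe that $H$ is hyperbolic relative to $\Z^2$ and that $G$ is hyperbolic relative to the suspended flat $\Z^3$, and conclude that Morse elements of $H$ stay non-peripheral, hence Morse, in $G$. One caveat: the paper needs $\varphi$ to preserve the free factor $F_2$ setwise (so that $G$ splits as $G_0\ast_{\langle t\rangle}\Z^3$ with $G_0=F_2\rtimes\Z$ hyperbolic); composing with ``transvections pushing $F_2$--generators across $\Z^2$'' would destroy this and is not needed, so you should drop it.

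Part (2), however, has a genuine gap, and it is exactly the one you flag yourself: you never establish that the two sequences of image points in $\mb G$ fail to share a limit. The difficulty you identify is real and, in your setup, essentially fatal --- since left translation is an isometry, the translated rays are uniformly Morse in $G$ and there is no gauge obstruction to convergence, so your strategy would require a delicate positional analysis near the flat that you do not supply. The paper's proof runs the argument in the opposite direction and thereby avoids this entirely. It takes the sequence $h_n=\varphi^n(h)=t^nht^{-n}$ for a Morse $h\in F_2$: these are \emph{uniformly} Morse in $H$ (they live in the undistorted free factor $F_2$), but $d_G(1,h_n)$ grows only linearly, and geodesics $[1,h_n]$ in $G$ are forced to fellow-travel the $\langle t\rangle$--direction, which is \emph{pairwise not Morse} because $t$ commutes with $\Z^2$ (Proposition \ref{prop:stablized-emelement-implies-pairwise-not-morse}); a quadrangle argument then shows the $[1,h_n]$ admit no uniform Morse gauge in $G$ (Proposition \ref{prop:not_weakly_Morse}). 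The contradiction with a continuous equivariant $f$ is then soft: for any $a\in\mb H$ the translates $h_n\cdot a$ lie in a single compact stratum of $\mb H$ and subconverge, so by continuity and equivariance $h_n\cdot f(a)$ converges in $\mb G$, hence lies in a single stratum, which forces the $[1,h_n]$ to be uniformly Morse in $G$ --- contradiction (Proposition \ref{prop:no_CT}). Note that this uses no north--south dynamics, no identification of $f$ on fixed points, and no non-convergence claim in $\mb G$; the obstruction is the failure of a \emph{uniform} Morse gauge for a sequence of group elements, detected via compactness of strata. To repair your write-up you would either need to carry out the positional analysis you defer, or switch to this stratum-compactness mechanism.
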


In fact, we prove a general criterion for non-existence of such a map in Theorem \ref{thm:no-ct-map}, which we believe can be used in other contexts as well, and we use it to prove Corollary \ref{cor:no_CT_example}, which is a more precise and more general version of the theorem above.

In the opposite direction, if we have $H< G$ with $\mb H$ empty, then trivially there exists a map at the level of Morse boundaries. However, we are not aware of examples in the literature where $H$ is distorted and a map at the level of Morse boundaries exists, except in the hyperbolic case. We provide one such example, and $H$ will again be a right-angled Artin group as in the previous theorem:

\begin{theorem}
\label{thm:main2}
    Let $H$ be the right-angled Artin group with defining graph a path of length $3$. There exists an automorphism $\varphi$ of $H$ such that for $G = H \rtimes_{\varphi} \mathbb Z$ the following hold. There exists a continuous $H$--equivariant map $f :\mb  H \to \mb G$. Moreover, if $\{h_n\in H\}\subseteq H$ and we have $h_n\to h_\infty\in \mb H$, then $h_n\to f(h_\infty)\in \mb G$.
\end{theorem}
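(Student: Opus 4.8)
The plan is to choose $\varphi$ to be an explicit, exponentially growing automorphism of $H=A_{P_4}$ (writing the defining path as $a-b-c-d$) --- concretely the composition of the two partial conjugations that conjugate $\langle c,d\rangle$ by $a$ and $\langle a,b\rangle$ by $d$. This is exponentially growing because of the feedback $a\mapsto(\text{word containing }d)$ and $d\mapsto ada^{-1}$, so $\varphi^n(b)$ and $\varphi^n(c)$ are long in $H$ but have length $O(n)$ in $G=H\rtimes_\varphi\Z$; this makes $H$ distorted, so the example is genuinely non-trivial. Crucially, the distorted directions are the \emph{flat} directions, not the Morse ones. Both $H$ and $G$ act on reasonable geometric models $X_H$, $X_G$ (a CAT(0) cube complex for $H$; for $G$ a model assembled from twisted copies of $X_H$ along the $\Z$--direction, with controlled projections), and I would define $f\colon\mb H\to\mb G$ geometrically: a point $\xi\in\mb H$ is represented by an $N$--Morse geodesic ray $\gamma$ in $X_H$ from the identity, and I set $f(\xi)$ to be the endpoint in $\mb G$ of the image ray $\iota\circ\gamma$, where $\iota\colon X_H\hookrightarrow X_G$ is the inclusion of the basepoint fibre.

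The key steps, in order, are as follows. \textbf{(1)} Identify $\mb H$ via the centralizer/join criterion for contracting elements in RAAGs: a cyclically reduced element of $A_{P_4}$ is Morse iff it is not conjugate into a subgroup generated by a proper join subgraph. The maximal such subgraphs are $\{a,b,c\}$ and $\{b,c,d\}$, giving the product regions $\langle a,c\rangle\times\langle b\rangle$ and $\langle b,d\rangle\times\langle c\rangle$; hence a ray is Morse exactly when it has no long subsegment trapped in such a region, forcing it to use the two ends $a$ and $d$ (whose links are disjoint) with uniformly bounded gaps. \textbf{(2)} Prove the crux: $\iota$ sends every $N$--Morse geodesic ray of $X_H$ to an $N'$--Morse quasigeodesic ray of $X_G$, with $N'=N'(N)$ and quasigeodesic constants independent of the ray. \textbf{(3)} Define $f(\xi)$ as the boundary point of $\iota\circ\gamma$; well-definedness follows since two rays representing the same $\xi$ are $H$--fellow-travelling with constant controlled by $N$, so by the Morse lemma in $X_G$ their images are $G$--fellow-travelling. \textbf{(4)} Prove continuity: $\mb H=\varinjlim {\mb}^N H$ carries the direct-limit topology, and on each stratum ${\mb}^N H$ the endpoint map into ${\mb}^{N'} G$ is continuous because fellow-travelling up to a time $T$ in $X_H$ transfers, via step (2), to fellow-travelling up to a comparable time in $X_G$. \textbf{(5)} Deduce the ``Moreover'' statement: if $h_n\to h_\infty$ in $\mb H$ then the $h_n$ eventually $N$--fellow-travel a fixed Morse ray $\gamma$ to $h_\infty$ in $X_H$; applying $\iota$ and step (2), the $G$--geodesics $[e,h_n]$ fellow-travel $\iota\circ\gamma$ by the Morse lemma, so $h_n\to f(h_\infty)$ in $\mb G$.

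The main obstacle is step (2). Because $H$ is distorted, $H$--geodesics are not $G$--geodesics, so a priori the image of a Morse ray could fail to be a quasigeodesic at all, or could lose its contraction property in $G$. The point to establish is that the directions collapsed by the distortion are precisely the flat and $t$--directions --- exactly the directions a Morse ray of $H$ avoids with bounded gaps by step (1) --- so that the contracting behaviour carried by the $a/d$--transitions survives the $\varphi$--twist. I expect to control this through projection and contraction estimates in $X_G$ (equivalently, a distance formula for $G$), showing that nearest-point projections onto $\iota\circ\gamma$ remain coarsely Lipschitz with gauge depending only on $N$.

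This is precisely the mechanism that \emph{fails} in Theorem~\ref{thm:no-ct-map} and Corollary~\ref{cor:no_CT_example}: there the automorphism spreads a Morse direction of $H$ into a direction of $G$ along which the Morse gauge cannot be controlled uniformly, so that sequences converging in $\mb H$ fail to converge (or converge to conflicting limits) in $\mb G$. For $P_4$, the rigidity coming from the two ends $a,d$ having disjoint links is what prevents this, and it is the feature I would exploit to make step~(2) go through.
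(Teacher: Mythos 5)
Your high-level architecture (stratify $\mb H$ by Morse gauge, show the inclusion sends $M$-Morse rays of $H$ to uniformly Morse, uniform quasi-geodesics of $G$, then assemble $f$ via the direct-limit topology) matches the paper's, and your steps (3)--(5) would indeed follow formally once step (2) is in place. But step (2) \emph{is} the theorem, and you do not prove it. You correctly identify the danger --- $H$ is distorted in $G$, so the image of a Morse geodesic of $H$ need not even be a quasi-geodesic of $G$, let alone Morse --- but your resolution is only that you ``expect to control this through projection and contraction estimates in $X_G$ (equivalently, a distance formula for $G$)''. No such distance formula or projection machinery is established (or known) for $G=H\rtimes_\varphi\Z$, and nothing in your outline supplies the uniform lower bound on $d_G$ in terms of $d_{H^{(M)}}$ that well-definedness, continuity on strata, and the ``moreover'' clause all require. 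This is a genuine gap, not a routine verification.

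The paper closes exactly this gap by a concrete mechanism you would need an analogue of. Choosing the transvection $a\mapsto ab$ (fixing $b,c,d$), the splitting $H=\langle a,b,c\rangle\ast_{\langle b,c\rangle}\langle b,c,d\rangle$ extends to a splitting $G=\langle a,b,c,t\rangle\ast_{\langle b,c,t\rangle}\langle b,c,d,t\rangle$; the associated Bass--Serre trees are equivariantly isometric (Lemma \ref{isom embedding}) and the $G$-action on the tree is acylindrical (Lemma \ref{lem:acyl}). Since the $M$-Morse part $H^{(M)}$ quasi-isometrically embeds into the tree of $H$ by \cite[Lemma 4.5]{cordescharneysisto}, composing with the tree isometry shows $H^{(M)}\hookrightarrow G$ is a quasi-isometric embedding, and acylindricity upgrades this to stability (Proposition \ref{prop:qi_stable}); that single proposition is what your step (2) is missing. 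Note also that your chosen $\varphi$ (a composition of partial conjugations) is not obviously compatible with any splitting of $H$ in this way, so even granting the strategy you would have to build the controlling hyperbolic space from scratch for that automorphism. The theorem only asserts existence of some $\varphi$, and the paper's transvection is chosen precisely because it fixes $b,c,d$ and hence preserves the edge group $\langle b,c\rangle$, which is what makes the two splittings align.
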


As we will see in the proof of Theorem \ref{thm:main2}, the reason that the Cannon-Thurston map exists in this case is the interesting phenomenon that, while $H$ is distorted in $G$, all Morse directions of $H$ are in fact undistorted in $G$.

\subsection{Some open questions}

There are a lot of questions that naturally arise. It seems out of reach to classify when Cannon--Thurston maps exist, and indeed this is not done even in the hyperbolic case. However, it might be possible to do so in concrete cases, for example:

\begin{question}
    Let $H$ be a right-angled Artin group. For which automorphisms $\varphi$ does there exist a continuous $H$--equivariant map $f :\mb  H \to \mb (H\rtimes_{\varphi} \mathbb Z)$?
\end{question}

\begin{problem}
    Finding interesting examples of existence and non-existence of Cannon--Thurston maps in the context of right-angled Coxeter groups.
\end{problem}

In a different direction, there are various phenomena related to the existence of Cannon--Thurston maps that we do not know whether they can happen or not. For instance:

\begin{question}
    Do there exist infinite groups $H\triangleleft G$ where $\mb H$ is empty, but $\mb G$ is non-empty?
\end{question}

For $H$ and $G$ as in the question, a Cannon--Thurston map trivially exists. Notice that $G$ in this case cannot be acylindrically hyperbolic, since infinite normal subgroups of acylindrically hyperbolic groups are acylindrically hyperbolic \cite[Lemma 8.6]{DahmaniGuirardelOsin}, \cite[Corollary 1.5]{Osin:acylindrically}.

Another question of this nature, inspired by \cite[Corollary 6.9]{CRSZ} and Theorem \ref{thm:main1}-(1), is:

\begin{question}
    Does there exist a hyperbolic subgroup $H\triangleleft G$ of a non-hyperbolic group $G$ where every non-trivial element of $H$ is Morse in $G$?
\end{question}

Note that JSJ theory puts strong restrictions on $H$ and $G$ as above, but on the other hand, if one such subgroup exists, a purely pseudo-Anosov but not convex-cocompact subgroup $H$ of the mapping class group of a closed surface $\Sigma$ could give rise to an example. Indeed, given $H$ as above there is a short exact sequence $1\to\pi_1(\Sigma)\to G\to H\to 1$, and $G$ is not hyperbolic but it does not contain $\mathbb Z^2$-subgroups, see \cite{FarbMosher,Hamenstaedt}. This second fact is not sufficient to show that non-trivial elements of $H$ are Morse in $G$, and in fact we do not know how to prove this, but it can be regarded as evidence towards this.

In this introduction we used the terminology ``Cannon--Thurston map'' loosely, but there are some subtleties. For $H<G$ one could consider the existence or non-existence of:
\begin{itemize}
    \item a continuous $H$-equivariant map $f:\mb H\to \mb G$ (as in Theorem \ref{thm:main1}),
    \item a map as above with the additional property that if $\{h_n\in H\}\subseteq H$ and we have $h_n\to h_\infty\in \mb H$, then $h_n\to f(h_\infty)\in \mb G$ (as in Theorem \ref{thm:main2} and \cite{CRSZ}),
    \item a continuous map $f:(H\cup\mb H)\to (G\cup\mb G)$ extending the inclusion $H\to G$, for suitable topologies on $H\cup \mb H$ and $G\cup \mb G$.
\end{itemize}

 To our knowledge, a topology on the union of a group and its Morse boundary has not been defined, even though it probably can be using machinery developed in \cite{cordeshume}.

\begin{question}
    Clarify the relations among the various notions described above. 
\end{question}

\subsection{Outline}

After some preliminaries in Section \ref{sec:prelim}, we prove Theorem \ref{thm:main1} in Section \ref{sec:no_CT}, and Theorem \ref{thm:main2} in Section \ref{sec:CT}.

\subsection*{Acknowledgements}
The authors would like to thank SNSF Ambizione Fellowship of the second author used to organize a research retreat where this paper had its genesis. The second author was supported in part by an SNSF Ambizione Fellowship. The third author was supported by both the EPSRC DTA studentship as well as the LMS Early Career Fellowship. All authors would like to thank the referee for prompt and helpful comments.

\section{Preliminaries}
\label{sec:prelim}
In this section, we will introduce the relevant notions and definitions.

Let $X$ be a geodesic metric space and let $M$ be a Morse gauge, that is, a function $M:\mathbb R_{\geq 1}\times \mathbb R_{\geq 0}\to \mathbb R_{\geq 0}$ which is increasing and continuous in the second factor. Recall that a quasi-geodesic $\gamma$ is $M$--Morse in $X$ if all $(K,C)$--quasi-geodesics $\lambda$ with endpoints on $\gamma$ stay in the $M(K,C)$--neighbourhood of $\gamma$.

In the following, given a finitely generated group $G$, we assume that we have a chosen finite generating set $S$ and identify $G$ with its Cayley graph $\mathrm{Cay}(G, S)$.

 We do not recall the full definition of the Morse boundary $\mb G$ of a group $G$ here, but instead we mention its main features:
\begin{itemize}
    \item $\mb G$ contains so-called strata $\mb^M G$, where $M$ is a Morse gauge, which are compact.
    \item $\mb G$ is the direct limit of the $\mb^M G$.
\end{itemize}
For a detailed definition of the Morse boundary, see \cite{cordes, cordeshume} and the corrigendum \cite{cordessistozbinden}.

\section{Non-existence of Cannon--Thurston maps}
\label{sec:no_CT}
The main goal of this section is to prove a criterion for non-existence of Cannon-Thurston maps. We state this below after a preliminary definition.

\begin{definition}[Weakly Morse]\label{def:weakly-morse}
    We say that a set $A\subset G$ is \emph{weakly $M$--Morse} if $[1, a]$ is $M$--Morse for all $a\in A$. We say that a subset $A\subset G$ is \emph{weakly Morse} if it is weakly $M$--Morse for some Morse gauge $M$. 
\end{definition}

In the statement below we use the notation $H \rtimes_{\varphi} Q$ for the semidirect product given by a homomorphism $\varphi:Q\to\textrm{Aut}(H)$. For $q\in Q$, we denote $\varphi_q=\varphi(q)$.

\begin{theorem}
\label{thm:no-ct-map}
    Let $G = H \rtimes_{\varphi} Q$. Let $q\in Q$ be an infinite-order undistorted element of $Q$, and let $x\in H$ be an infinite order element such that $\varphi_q(x) = x$. Also assume that there exists an element $h\in H$ such that $\{\varphi_q^n(h)\}_{n\in \N}$ is weakly Morse in $H$ and $d_G(1, \varphi_q^n(h))\to \infty$ as $n \to \infty$.  Then there does not exist a continuous $H$--equivariant map $f :\mb  H \to \mb G$.  
\end{theorem}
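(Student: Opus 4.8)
The plan is to argue by contradiction: assume a continuous $H$--equivariant map $f\colon\mb H\to\mb G$ exists, and produce a sequence of boundary points that converges in $\mb H$ but whose $f$--images are forced to escape every stratum of $\mb G$. Write $a_n=\varphi_q^n(h)$, so that in $G$ one has $a_n=q^nhq^{-n}$. Since the inclusion $H\hookrightarrow G$ is Lipschitz, $d_G(1,a_n)\to\infty$ gives $d_H(1,a_n)\to\infty$; combined with the hypothesis that every $[1,a_n]$ is $M$--Morse in $H$, the sequence $a_n$ runs along uniformly Morse segments going to infinity. The same holds for $a_n^{-1}$ (the reversed segment has the same gauge $M$), so by compactness of $\mb^M H$ we may pass to a subsequence along which $a_n^{-1}$ converges to some $\eta'\in\mb^M H$. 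Fix once and for all a point $\xi_0\in\mb H\setminus\{\eta'\}$.

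The first half of the argument is the $\mb H$--side, where the weakly-Morse hypothesis does the work. I claim the translates $a_n\cdot\xi_0$ all lie in a single stratum $\mb^{M'}H$. Indeed, a geodesic from $1$ representing $a_n\cdot\xi_0$ is, up to bounded error, the concatenation of the $M$--Morse segment $[1,a_n]$ with the translate $a_n\gamma_{\xi_0}$ of a fixed $M_0$--Morse ray representing $\xi_0$; because $\xi_0\neq\eta'$, this concatenation does not backtrack at the corner $a_n$ for $n$ large, so by the local-to-global property for Morse quasi-geodesics it is uniformly $M'$--Morse, with $M'$ depending only on $M$ and $M_0$. Since $\mb^{M'}H$ is compact, after passing to a further subsequence we obtain $a_n\cdot\xi_0\to\omega$ for some $\omega\in\mb H$. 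Continuity of $f$ then yields $f(a_n\cdot\xi_0)\to f(\omega)$, and $H$--equivariance rewrites this as $a_n\cdot f(\xi_0)\to f(\omega)$ in $\mb G$.

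The second half is the $\mb G$--side, and this is where I expect the main difficulty. The goal is to show that, in contrast, $a_n\cdot f(\xi_0)$ leaves every stratum $\mb^N G$ as $n\to\infty$; since a convergent sequence in the direct limit $\mb G$ is eventually contained in a single (compact) stratum, this contradicts the convergence just obtained. A based geodesic representing $a_n\cdot f(\xi_0)$ fellow-travels $[1,a_n]$ measured in $G$, so it suffices to prove that the $G$--Morse gauge of $[1,a_n]$ tends to infinity. This is exactly where the two remaining hypotheses combine: $a_n=q^nhq^{-n}$ is conjugate to the bounded element $h$ by $q^n$, and $q$ is an infinite-order undistorted element, so as $d_G(1,a_n)\to\infty$ the geodesic $[1,a_n]$ is forced to make an excursion of height $\to\infty$ in the $q$--direction; and since $\varphi_q(x)=x$ means $x$ commutes with $q$, the flat $\langle x,q\rangle\cong\Z^2$ (and the relevant cosets met along this excursion) supplies transverse $x$--detours of length $\to\infty$, preventing $[1,a_n]$ from being $N$--Morse for any fixed $N$ once $n$ is large.

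The delicate point—and the step I expect to require the most care—is making this excursion argument rigorous at the stated level of generality: one must rule out an unforeseen $G$--shortcut to $a_n$ that would keep $[1,a_n]$ short and Morse, and then certify that the geodesic genuinely runs alongside a wide flat. The hypothesis $d_G(1,a_n)\to\infty$ governs the height of the excursion, while $\varphi_q(x)=x$ produces the transverse flat direction, so it is precisely the interplay of these two assumptions—against the backdrop of $[1,a_n]$ being \emph{uniformly} Morse in $H$—that forces the Morse gauge to blow up in $G$ but not in $H$. Once the escape on the $\mb G$--side is established, it contradicts the convergence produced on the $\mb H$--side, and the proof is complete.
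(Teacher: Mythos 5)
Your first half---translating a boundary point $\xi_0$ by $a_n=\varphi_q^n(h)$, using the uniform $M$--Morseness of $[1,a_n]$ in $H$ to trap $a_n\cdot\xi_0$ in a single compact stratum, extracting a convergent subsequence, and then using continuity and equivariance of $f$ to force $a_n\cdot f(\xi_0)$ to converge (hence stay in one stratum of $\mb G$, hence make $[1,a_n]$ uniformly Morse in $G$)---is essentially the paper's Proposition \ref{prop:no_CT}, and it is correct. (The detour through the limit $\eta'$ of $a_n^{-1}$ and the non-backtracking condition is unnecessary: the ideal triangle with vertices $1$, $a_n$, $a_n\xi_0$ has two uniformly Morse sides, so the third is uniformly Morse by \cite[Lemma 2.3]{cordes} with no transversality hypothesis.) You have therefore correctly reduced the theorem to the claim that the $G$--Morse gauges of $[1,a_n]$ are unbounded, i.e.\ that $\{a_n\}$ is not weakly Morse in $G$.

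That claim, however, is the entire technical content of the theorem, and your proposal does not prove it; you flag this yourself. The specific mechanism you sketch---that $d_G(1,a_n)\to\infty$ forces the geodesic $[1,a_n]$ to ``make an excursion of height $\to\infty$ in the $q$--direction'' and to ``run alongside'' the flat $\langle x,q\rangle$---is unjustified and would be hard to establish at this level of generality: nothing in the hypotheses locates the geodesic $[1,a_n]$ near the $q$--axis or near any coset of $\langle x,q\rangle$, and ruling out the ``unforeseen shortcut'' you mention is exactly the missing argument. The paper avoids ever locating $[1,a_n]$ by splitting the claim into two steps. First (Lemma \ref{lemma:big-quadrangles} and Proposition \ref{prop:stablized-emelement-implies-pairwise-not-morse}), the segments $[1,q^k]$ are not uniformly Morse: since $x$ commutes with $q$, one builds a second $(K,C)$--quasi-geodesic from $1$ to $q^k$ that detours through $x^m q^l$, at Hausdorff distance $\geq D$ from the axis for suitable $m$, so no fixed gauge can be $M$--Morse for all $k$. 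Second (Proposition \ref{prop:not_weakly_Morse}), one uses the algebraic identity $q^k\varphi_q^{n-k}(h)=\varphi_q^n(h)q^k$ to form a quadrangle with vertices $1$, $q^k$, $a_nq^k$, $a_n$ in which two opposite sides are translates of $[1,q^k]$ (one of them short, of length comparable to $L$) and the other two are $[1,a_n]$ and $q^k[1,a_{n-k}]$; if the latter two were uniformly $M$--Morse in $G$, the thin-quadrangle lemma would force $[1,q^k]$ to be $M'$--Morse for a gauge $M'$ depending only on $M$, contradicting the first step. You identified the right ingredients (the commuting pair $x,q$ and the conjugation $a_n=q^nhq^{-n}$), but without this parallelogram argument, or some substitute for it, the proof is incomplete at its central step.
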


We now give a preliminary definition, and then summarise how the results in this section yield Theorem \ref{thm:no-ct-map}.

\begin{definition}[Pairwise not Morse]\label{def:pairwise-not-morse}
    We say that a set $A\subset G$ is \emph{pairwise not Morse} if for every Morse gauge $M$ there exists a constant $L\geq 0$ such that for all $a, b\in A$ with $d(a, b)\geq L$ we have that $[a, b]$ is not $M$--Morse. We say that an element $g\in G$ is \emph{pairwise not Morse} if $\langle g \rangle$ is. 
\end{definition}

\begin{proof}[Proof of Theorem \ref{thm:no-ct-map}]
    Proposition \ref{prop:stablized-emelement-implies-pairwise-not-morse} below guarantees that $q$ is pairwise not Morse. Therefore we can apply Proposition \ref{prop:not_weakly_Morse} below to get that $\{\varphi_q^n(h)\}_{n\in \N}$ is not weakly Morse in $G$. We can then use Proposition \ref{prop:no_CT} below with $h_n=\varphi_q^n(h)$ to conclude that there is no continuous $H$--equivariant map $f :\mb  H \to \mb G$.  
\end{proof}

\subsection{Elements fixed by automorphisms}

\begin{prop}\label{prop:stablized-emelement-implies-pairwise-not-morse}
    Let $G = H \rtimes_{\varphi} Q$. Let $q\in Q$ be an infinite-order undistorted element of $Q$. If there exists an infinite order element $h\in H$ such that $\varphi_q(h)=h$, then $q$ is pairwise not Morse.
\end{prop}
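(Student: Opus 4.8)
The plan is to fix an arbitrary Morse gauge $M$ and show that geodesics of the form $[1,q^k]$ cease to be $M$--Morse once $k$ is large, and then transport this to arbitrary pairs $q^i,q^j\in\langle q\rangle$. Since left multiplication by $q^{-i}$ is an isometry of $\mathrm{Cay}(G,S)$ carrying $[q^i,q^j]$ to a geodesic $[1,q^{j-i}]$ and preserving the Morse gauge exactly, and since reversing orientation (equivalently translating $[1,q^{-m}]$ by $q^m$) shows that $[1,q^{-m}]$ is $M$--Morse iff $[1,q^{m}]$ is, it suffices to find $k_0$ so that $[1,q^{k}]$ is not $M$--Morse for all $k\ge k_0$. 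The key external input is Lemma \ref{lemma:big-quadrangles}, which (using the hypotheses that $q$ is infinite order and undistorted and that $\varphi_q$ fixes the infinite order element $h$) produces fixed quasi-geodesic constants $(K,C)$ and, for every prescribed $D>0$, a threshold beyond which there are two $(K,C)$--quasi-geodesics $\gamma_1,\gamma_2$ from $1$ to $q^k$ with Hausdorff distance at least $D$.

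The heart of the argument is to show that $M$--Morseness of $[1,q^k]$ would force $\gamma_1$ and $\gamma_2$ to be uniformly Hausdorff close, with a bound independent of $k$; comparing this with Lemma \ref{lemma:big-quadrangles} then yields the contradiction. One inclusion is immediate: if $\sigma=[1,q^k]$ is $M$--Morse, then both $\gamma_i$, being $(K,C)$--quasi-geodesics with endpoints $1,q^k\in\sigma$, lie in the $M(K,C)$--neighbourhood of $\sigma$. This one-sided containment is \emph{not} enough to bound $d_{\mathrm{Haus}}(\gamma_1,\gamma_2)$, since two sets inside a common neighbourhood of a long geodesic can be far apart; this is the main obstacle. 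I would resolve it using the standard fact (see e.g. \cite{cordes, cordeshume}) that a $(K,C)$--quasi-geodesic with endpoints on an $M$--Morse geodesic is itself $M'$--Morse for some $M'=M'(M,K,C)$ depending only on $M,K,C$. Applying this to each $\gamma_i$ and noting that $\sigma$ is a $(1,0)$--quasi-geodesic with endpoints on $\gamma_i$, we get $\sigma\subset N_{M'(1,0)}(\gamma_i)$. Combining the two directions gives $\gamma_1\subset N_{M(K,C)}(\sigma)\subset N_{M(K,C)+M'(1,0)}(\gamma_2)$ and symmetrically, so $d_{\mathrm{Haus}}(\gamma_1,\gamma_2)\le B:=M(K,C)+M'(1,0)$, a constant independent of $k$.

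To finish, I would apply Lemma \ref{lemma:big-quadrangles} with $D=B+1$ to obtain $k_0$ such that for every $k\ge k_0$ there exist $(K,C)$--quasi-geodesics $\gamma_1,\gamma_2$ from $1$ to $q^k$ with $d_{\mathrm{Haus}}(\gamma_1,\gamma_2)\ge B+1>B$; by the previous paragraph this is incompatible with $[1,q^k]$ being $M$--Morse, so $[1,q^k]$ is not $M$--Morse for all $k\ge k_0$. Setting $L=1+\max\{d_G(1,q^m): |m|<k_0\}$ (a finite maximum, as $q$ has infinite order and the Cayley graph is locally finite), any $q^i,q^j$ with $d_G(q^i,q^j)\ge L$ satisfy $d_G(1,q^{j-i})\ge L$, forcing $|j-i|\ge k_0$, and hence $[q^i,q^j]$—isometric to $[1,q^{j-i}]$, which by sign-symmetry is $M$--Morse iff $[1,q^{|j-i|}]$ is—is not $M$--Morse. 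As $M$ was arbitrary, $\langle q\rangle$ is pairwise not Morse in the sense of Definition \ref{def:pairwise-not-morse}, i.e. $q$ is pairwise not Morse, as required.
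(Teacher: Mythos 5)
Your proof is correct and follows essentially the same route as the paper: fix a Morse gauge $M$, invoke Lemma \ref{lemma:big-quadrangles} to produce $(K,C)$--quasi-geodesics from $1$ to $q^k$ that are far apart in Hausdorff distance, contradict the uniform Hausdorff-closeness that $M$--Morseness of $[1,q^k]$ would force, and then translate by powers of $q$ to handle general pairs. The only (harmless) difference is that you derive the closeness bound by hand from the two one-sided neighbourhood containments, whereas the paper cites it as a known lemma from \cite{cordes}; your handling of negative exponents and the explicit choice of $L$ is in fact slightly more careful than the paper's.
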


\begin{proof}
    Note that if $q$ is an infinite-order undistorted element of $Q$, then if it is not Morse, it is also pairwise not Morse. As shown in e.g. \cite[Lemma 3.8]{CalvezWiest}, if $q$ is Morse, then $\langle q \rangle$ has finite index in its centralizer. If $\varphi_q(h)=h$, then $\langle h \rangle$ is in the centralizer of $q$ and hence $q$ cannot be Morse.
     % Let $(K, C)$ be the constants from Lemma \ref{lemma:big-quadrangles} and let $M$ be a Morse gauge. In light of Lemma \cite{cordes}, there exists a constant $D$ such that any pair $\gamma_1, \gamma_2$ of $(K, C)$--quasi-geodesics with the same endpoints on an $M$--Morse geodesic have Hausdorff distance at most $D$. Lemma \ref{lemma:big-quadrangles} shows that for large enough $k$, there exist two $(K, C)$--quasi-geodesics from $1$ to $q^k$ which are at Hausdorff distance more than $D$. Hence, for large enough $k$, $[1, q^k]$ is not $M$--Morse. This implies that for large enough $L$, if $d(q^l, q^{k+l})\geq L$, then $[q^l, q^{k+l}] = q^l[1, q^{k}]$ is not $M$-Morse, which concludes the proof.
\end{proof}

\subsection{Iterates are not weakly Morse}

\begin{prop}
\label{prop:not_weakly_Morse}
    Let $G = H \rtimes_{\varphi} Q$. Assume that there exists $q \in Q$ which is pairwise not Morse and also assume that there exists an element $h\in H$ such that $\{\varphi_q^n(h)\}_{n\in \N}$ is weakly Morse in $H$ and $d_G(1, \varphi_q^n(h))\to \infty$ as $n \to \infty$. Then the set $\{\varphi_q^n(h)\}_{n\in \N}$ is not weakly Morse in $G$. 
\end{prop}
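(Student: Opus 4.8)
The plan is to argue by contradiction. Suppose the set $\{\varphi_q^n(h)\}_{n\in\N}$ is weakly $M$--Morse in $G$ for some Morse gauge $M$, so that each geodesic $\sigma_n := [1, \varphi_q^n(h)]$ is $M$--Morse in $G$. The guiding geometric picture is the identity $\varphi_q^n(h) = q^n h q^{-n}$ in $G$: reaching $\varphi_q^n(h)$ is cheapest by travelling out along the $q$--axis to $q^n$, making a bounded detour by $h$, and returning along a translate of the $q$--axis. Thus a geodesic to $\varphi_q^n(h)$ is forced to fellow-travel a long segment of the $q$--axis, and since $q$ is \emph{pairwise not Morse} (Definition \ref{def:pairwise-not-morse}), such a segment cannot be Morse, giving the desired contradiction.

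To make this precise I would first consider the path $\beta_n := [1, q^{m_n}] \circ [q^{m_n}, \varphi_q^n(h)]$ for an appropriate depth $m_n$, whose apex is $q^{m_n}$. The key claim is that $\beta_n$ is a $(K,C)$--quasi-geodesic with constants uniform in $n$, and that $m_n \to \infty$. Granting this, since $\sigma_n$ is $M$--Morse and $\beta_n$ shares its endpoints, $\beta_n$ lies in the $M(K,C)$--neighbourhood of $\sigma_n$; in particular there is a point $z_n \in \sigma_n$ with $d_G(z_n, q^{m_n}) \leq M(K,C)$. The initial subsegment $[1, z_n]$ of $\sigma_n$ is then $M'$--Morse for a gauge $M'$ depending only on $M$ (subsegments of Morse geodesics are Morse), and since its endpoints are within bounded distance of $1$ and $q^{m_n}$, stability of the Morse property under bounded perturbation of the endpoints (see \cite{cordes}) shows that the $q$--axis geodesic $[1, q^{m_n}]$ is $M''$--Morse for some $M''$ depending only on $M$.

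This contradicts the hypothesis that $q$ is pairwise not Morse: applying Definition \ref{def:pairwise-not-morse} to the gauge $M''$ yields a constant $L$ so that $[1, q^{m_n}]$ fails to be $M''$--Morse as soon as $d_G(1, q^{m_n}) \geq L$, which holds for all large $n$ because $m_n \to \infty$ and $q$ is undistorted. This completes the contradiction and shows $\{\varphi_q^n(h)\}_{n\in\N}$ is not weakly Morse in $G$.

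I expect the main obstacle to be the geometric claim of the second paragraph, namely that $\beta_n$ is a uniform quasi-geodesic passing near a deep power $q^{m_n}$ with $m_n\to\infty$. Equivalently, one must rule out a genuine shortcut to $\varphi_q^n(h)$ that avoids the $q$--axis. This is exactly where the two remaining hypotheses should enter: the assumption $d_G(1,\varphi_q^n(h)) \to \infty$ forces the geodesic $\sigma_n$ to be long and hence to probe deep into the $q$--direction (pinning down $m_n\to\infty$), while weak Morseness of $\{\varphi_q^n(h)\}$ in $H$ controls the $H$--side geometry of the endpoints and prevents the competing ``stay inside $H$'' route from being the efficient one. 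I would isolate this as a lemma about the shape of geodesics to $q^n h q^{-n}$, proved via an estimate on the junction point analogous to Lemma 2.9 of \cite{zbinden:graphOfGroups} used in Lemma \ref{lemma:big-quadrangles}, choosing $m_n$ as the largest depth for which the excursion remains a quasi-geodesic and verifying $m_n\to\infty$ from the distance hypothesis.
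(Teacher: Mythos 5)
There is a genuine gap, and it sits exactly where you suspected: the claim that some geodesic $\sigma_n=[1,\varphi_q^n(h)]$ must pass within bounded distance of a power $q^{m_n}$ with $m_n\to\infty$ (equivalently, that $\beta_n=[1,q^{m_n}]\circ[q^{m_n},\varphi_q^n(h)]$ is a uniform quasi-geodesic) is not established, and the hypotheses of the proposition do not readily yield it. The assumptions say nothing about the shape of $G$--geodesics to the points $\varphi_q^n(h)$: the heuristic route $1\to q^n\to q^nh\to q^nhq^{-n}$ only gives an upper bound on $d_G(1,\varphi_q^n(h))$, and $d_G(1,\varphi_q^n(h))\to\infty$ does not force geodesics to fellow-travel the $q$--axis at increasing depth. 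Your suggestion that weak Morseness of $\{\varphi_q^n(h)\}$ in $H$ ``prevents the stay-inside-$H$ route from being efficient'' is not an argument: that hypothesis concerns geodesics of $H$ being Morse in $H$ and gives no control on geodesics of $G$. (A smaller issue: you invoke undistortedness of $q$ to get $d_G(1,q^{m_n})\to\infty$, but that is a hypothesis of Theorem \ref{thm:no-ct-map}, not of this proposition.) The downstream steps of your argument (projecting $q^{m_n}$ to $\sigma_n$, passing to a subsegment, perturbing endpoints) are fine, but they all rest on the unproven claim.

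The paper's proof avoids this issue entirely, and it is worth seeing how. Instead of sending the depth to infinity, it fixes a \emph{single bounded} power $q^k$ with $L\le d_G(1,q^k)\le 2L$, where $L$ is the constant from ``pairwise not Morse'' applied to a suitable gauge $M'$. The key input is the identity $q^k\varphi_q^{n-k}(h)=\varphi_q^n(h)q^k$: under the contradiction hypothesis both $[1,\varphi_q^n(h)]$ and the translate $q^k[1,\varphi_q^{n-k}(h)]$ are long $M$--Morse geodesics whose initial points are $1$ and $q^k$ and whose terminal points are within $d_G(1,q^k)\le 2L$ of each other. Projecting $1$ onto the translated geodesic and using a quadrangle with two $M$--Morse sides and one side of bounded length then forces the fourth side $[1,q^k]$ to be $M'$--Morse, contradicting the choice of $L$. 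In other words, the contradiction is extracted from two nearby Morse geodesics at a fixed, bounded conjugation depth $k$, not from a single geodesic probing arbitrarily deep along the $q$--axis. To repair your proof you would either need to prove your deep-excursion lemma (which appears to require substantially more structure than the proposition assumes) or switch to the fixed-$k$ conjugation trick.
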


\begin{proof}
    Note that $d_G(1, \varphi_q^n(h))\to \infty$ as $n \to \infty$ implies that $q$ has infinite order.
    
    Assume by contradiction that $\{\varphi_q^n(h)\}_{n\in \N}$ is weakly $M$--Morse in $G$ for some Morse gauge $M$. In fact, we can assume that any subsegment of $[1, \varphi_q^n(h)]$ is $M$-Morse (see for example Lemma 2.1 of \cite{cordes}). Let $C = M(3, 0)$ and let $M'$ be the Morse gauge such that in every quadrangle where one side has length at most $C$, two of the other sides are $M$-Morse, the fourth side is $M'$--Morse (such an $M'$ exists by \cite[Lemma 2.3]{cordes}). Let $L\geq 4 d_G(1, q)$ be the length such that for every pair of points $q^k, q^l$ with $d_G(q^k, q^l)\geq L$ we have that $[q^k, q^l]$ is not $M'$--Morse. Such an $L$ exists since $q$ is not pairwise Morse. Lastly, let $n\in \N$ be such that $d_G(1, \varphi_q^n(h)) \geq 10^9L$ and let $k$ be such that $L\leq d_G(1, q^k)\leq 2L$. 

    Let $x$ and $y$ be the points on $[q^k, q^k \varphi^{n-k}(h)]$ closest to $1$ and $\varphi^n(h)$ respectively. By Lemma 2.9 of \cite{zbinden:graphOfGroups} we have that $\lambda = [1, x]\circ[q^k, q^k \varphi^{n-k}(h)]_{[x, y]} \circ [y, \varphi^n(h)]$ is a $(3, 0)$--quasi-geodesic. Since $[1, \varphi^n(h)]$ is $M$--Morse by assumption, there exists a point $z$ on $[1, \varphi^n(h)]$ with $d_G(z, x)\leq M(3, 0) = C$. Now consider the quadrangle $\mc Q = ([1, \varphi^n(h)]_{[1, z]}, [z, x], [ q^k \varphi^{n-k}(h), q^k]_{[x, q^k]}, [q^k, 1])$. Note that by assumption, $[1, \varphi^n(h)]_{[1, z]}$ and $[ q^k \varphi^{n-k}(h), q^k]_{[x, q^k]}$ are $M$--Morse and $d(x, z)\leq C$. Thus, by the choice of $M'$, we know that the fourth side $[1, q^k$] is $M'$--Morse. This is a contradiction to the choice of $L$, concluding the proof.
\end{proof}

\subsection{A criterion for non-existence of Cannon-Thurston maps}
\begin{prop}
\label{prop:no_CT}
     Let $G = H \rtimes_{\varphi} Q$. If there exists a sequence of distinct elements $(h_n)_{n\in \N}$ of $H$ such that $\{h_n\}_{n\in \N}$ is weakly Morse in $H$ but not weakly Morse in $G$, then there does not exist a continuous $H$--equivariant map $f :\mb  H \to \mb G$.
\end{prop}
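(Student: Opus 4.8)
The plan is to argue by contradiction: assuming a continuous $H$--equivariant map $f:\mb H\to\mb G$ exists, I will show that $\{h_n\}$ must in fact be weakly Morse in $G$, contrary to hypothesis. First I would extract a boundary point to work with. Since the $h_n$ are distinct and the Cayley graph of $H$ is locally finite, after passing to a subsequence I may assume $d_H(1,h_n)\to\infty$. As $\{h_n\}$ is weakly $M$--Morse, the geodesics $[1,h_n]$ are $M$--Morse of unbounded length, so by an Arzel\`a--Ascoli argument a further subsequence converges uniformly on compact sets to an $M$--Morse ray $\gamma$ representing a point $\xi\in\mb^M H$. Left--translating by $h_n^{-1}$ shows that $[1,h_n^{-1}]$ is $M$--Morse as well, so similarly I obtain a ``backward'' point $\xi^-\in\mb^M H$ as a subsequential limit of the $[1,h_n^{-1}]$; I will arrange $\xi\neq\xi^-$ (the two directions of the escaping sequence).

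The key structural inputs are the two facts recalled in Section~\ref{sec:prelim}: each stratum $\mb^M$ is compact, and $\mb G$ is the direct limit of its strata. The latter has the standard consequence that any compact subset of $\mb G$ is contained in a single stratum. Applying this to the compact set $f(\mb^M H)$ (continuous image of the compact $\mb^M H$) gives a Morse gauge $N$ with $f(\mb^M H)\subseteq\mb^N G$; in particular $f(\xi)\in\mb^N G$, represented by an $N$--Morse ray $\sigma$ in $G$ from $1$. Now I would feed in the group action. Using that the $H$--action on $\mb H$ has convergence--type dynamics, with $\xi$ the attracting and $\xi^-$ the repelling point of the sequence $(h_n)$, one gets $h_n\cdot\xi\to\xi$ in $\mb H$ with the orbit eventually in a single stratum. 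Continuity and equivariance then yield $h_nf(\xi)=f(h_n\xi)\to f(\xi)$ in $\mb G$, so $\{h_nf(\xi)\}\cup\{f(\xi)\}$ is compact and hence lands in a single stratum $\mb^{N'}G$. Thus $z_n:=h_nf(\xi)$ is represented simultaneously by an $N'$--Morse ray $\rho_n$ from $1$ and, by equivariance, by the $N$--Morse ray $h_n\sigma$ issuing from $h_n$.

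The final and hardest step is to convert this into uniform Morseness of the segments $[1,h_n]$ \emph{in $G$}. The rays $\rho_n$ and $h_n\sigma$ are asymptotic and uniformly Morse, so they fellow--travel with a constant depending only on $N,N'$; I want to conclude that $h_n$ lies within uniformly bounded $G$--distance of $\rho_n$ and that $[1,h_n]$ therefore tracks the initial $N'$--Morse portion of $\rho_n$, making $[1,h_n]$ $N''$--Morse for a uniform $N''$ and contradicting the hypothesis. The obstacle is a ``direction'' problem caused by the distortion of $H$ in $G$: fellow--traveling of asymptotic rays only controls their far ends, so a priori the basepoint $h_n$ of $h_n\sigma$ need not sit between $1$ and $z_n$ (the $f(\xi)$--direction could double back toward $1$). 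I expect to resolve this by running the same orbit argument simultaneously for the repelling data, i.e.\ for $h_n^{-1}$ and $\xi^-$, so as to pin down the $G$--direction from $h_n$ back to $1$ as well; controlling both ends then brackets $h_n$ on a uniformly Morse ray and forces $[1,h_n]$ to be uniformly Morse in $G$. This is the step where the precise relationship between the geometry of $H$ and of $G$, rather than just the abstract properties of $f$, must be used.
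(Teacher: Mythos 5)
Your overall architecture --- push a boundary point of $H$ around by the $h_n$, use compactness of strata to trap the orbit of its $f$--image in a single stratum of $\mb G$, and derive uniform Morseness of $[1,h_n]$ in $G$ --- is exactly the paper's strategy. But two of your steps have genuine gaps, and in both cases the missing ingredient is the same elementary fact, which the paper uses twice: in a (possibly ideal) geodesic triangle, if two sides are Morse then the third is Morse with a gauge depending only on the gauges of the first two (Lemma 2.3 of \cite{cordes}). The first gap is your justification of ``$h_n\cdot\xi\to\xi$ with the orbit eventually in a single stratum'' via convergence--type dynamics of $H\acts\mb H$ with $\xi,\xi^-$ as attracting/repelling points of $(h_n)$. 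The Morse boundary is not known to carry convergence dynamics in general, and $(h_n)$ is an arbitrary sequence of distinct elements rather than the powers of a single element, so there is no reason it should have a well-defined attracting point, let alone the specific $\xi$ you constructed. None of this is needed: take \emph{any} $a\in\mb H$ (nonempty, as your Arzel\`a--Ascoli argument shows). The ideal triangle with vertices $1$, $h_n$, $h_n a$ has two uniformly Morse sides ($[1,h_n]$ by weak Morseness of $\{h_n\}$ in $H$, and $h_n\cdot[1,a)$ as a translate of a fixed Morse ray), so $[1,h_n a)$ is $M$--Morse for a uniform $M$; compactness of a refined stratum $\mb^M H$ then yields a convergent subsequence $h_na\to a_\infty$, with no claim that the limit is $\xi$ or $a$.

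The second gap is your final step, which you leave as ``I expect to resolve this by\dots''. The ``direction problem'' you worry about (that $h_n$ might not sit between $1$ and $z_n$, and that asymptotic rays only fellow--travel far from their basepoints) dissolves once you apply the triangle lemma in $G$ instead of trying to fellow--travel rays by hand: the ideal triangle with vertices $1$, $h_n$, $h_nf(a)$ has the side $[1,h_nf(a))$ uniformly $M'$--Morse (since $h_nf(a)=f(h_na)\to f(a_\infty)$ by continuity and equivariance, and the sequence together with its limit is compact, hence contained in a single stratum of $\mb G$) and the side $[h_n,h_nf(a))=h_n[1,f(a))$ a translate of a fixed Morse ray; therefore the third side $[1,h_n]$ is $M''$--Morse in $G$ for a uniform $M''$, contradicting the hypothesis. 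No bracketing by a repelling point, and no second boundary point $\xi^-$, is required; your diagnosis that this step must use the precise relationship between the geometries of $H$ and $G$ is also off the mark --- it only uses the thin--triangle lemma in $G$.
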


\begin{proof}
    Assume by contradiction that there exists a continuous $H$--equivariant map $f: \mb H \to \mb G$. Let $a\in \mb H$. 
    Since $\{h_n\}_n$ is weakly Morse in $H$, there exists a Morse gauge $M$ such that $h_na\in \mb^M H$ for all $n\in \N$ (this is because if two sides of a triangle are Morse, then the third one is as well, quantitatively, see \cite[Lemma 2.3]{cordes}). We can assume that $M$ is refined in the sense of \cite{cordessistozbinden}, so that the stratum $\mb ^M H$ is compact. The sequence $(h_n\cdot a)_n$ has a converging subsequence. In fact, up to passing to a subsequence, we can assume that $(h_n\cdot a)_n$ converges to a point $a_{\infty}\in \mb H$. 

    Since $f$ is continuous and $H$--equivariant, the sequence $(h_n\cdot f(a))_n$ converges to $f(a_{\infty})$. In particular, there exists a Morse gauge $M'$ such that $h_n\cdot f(a)\in \mb^{M'}G$ for all $n\in \N$ (indeed the union of the sequence and its limit is compact so the sequence is contained in a stratum by \cite[Lemma 4.1]{cordesdurham}). 

    Again by \cite[Lemma 2.3]{cordes}, there exists a Morse gauge $M''$ such that $[1, h_n]$ is $M''$--Morse in $G$ for all $n$. This is a contradiction to $\{h_n\}_{n\in \N}$ not being weakly Morse in $G$.
\end{proof}

\subsection{An example}
We now give an example where we can apply Theorem \ref{thm:no-ct-map}. 
\begin{corollary}\label{cor:no_CT_example}
    Let $H=H_0\ast \mathbb Z^2$, where $H_0$ is a hyperbolic group, and consider an automorphism $\varphi : H \to H$ with $\varphi|_{H_0}:H_0\to H_0$ a hyperbolic automorphism, and $\varphi|_{\mathbb Z^2}$ the identity. Let $G = H \rtimes_{\varphi} \mathbb Z$. Then
    \begin{enumerate}
        \item (Type-preserving) Every Morse element of $H$ is also a Morse element of $G$, but
        \item (No Cannon-Thurston map) there does not exist a continuous $H$--equivariant map $f :\mb  H \to \mb G$.
    \end{enumerate}
\end{corollary}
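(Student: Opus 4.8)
The plan is to verify that $H$, $\varphi$, and the element $q$ generating the $\mathbb Z$ factor satisfy all the hypotheses of Theorem \ref{thm:no-ct-map}, so that part (2) follows immediately, and then to prove the type-preserving statement (1) separately by a direct analysis of Morse elements. For part (2), I take $q$ to be a generator of the acting $\mathbb Z$, which is an infinite-order undistorted element of $Q=\mathbb Z$; I need a fixed infinite-order element $x\in H$ with $\varphi_q(x)=x$, and this is provided by any nontrivial element of the $\mathbb Z^2$ free factor, since $\varphi|_{\mathbb Z^2}$ is the identity. The only remaining hypothesis is the existence of $h\in H$ with $\{\varphi_q^n(h)\}_{n\in\N}$ weakly Morse in $H$ but with $d_G(1,\varphi_q^n(h))\to\infty$.

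For this last point I would take $h\in H_0$ to be an element whose geodesic $[1,h]$ in $H$ is Morse (for instance a loxodromic/infinite-order element of the hyperbolic group $H_0$, whose image in the free product is Morse since $H_0$ is a free factor and hyperbolic). Since $\varphi|_{H_0}$ is a hyperbolic automorphism of $H_0$, the iterates $\varphi_q^n(h)$ stay in $H_0$, and I would argue that the family $\{[1,\varphi_q^n(h)]\}$ is uniformly Morse in $H$: all these elements lie in the free factor $H_0$, whose geodesics into $H$ have a uniform Morse gauge depending only on the hyperbolicity constant of $H_0$ (elements of a hyperbolic free factor are uniformly Morse in the free product). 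This gives weak $M$--Morseness in $H$ for a single gauge $M$. For divergence in $G$, I would use that $\varphi$ being a hyperbolic automorphism forces the translation lengths, equivalently the distances $d_H(1,\varphi_q^n(h))$, to grow; more to the point, one needs $d_G(1,\varphi_q^n(h))\to\infty$, and since the retraction $G\to Q$ collapses these, I instead note that distinct group elements in a locally finite Cayley graph force divergence once one checks the $\varphi_q^n(h)$ are eventually all distinct and escape every bounded set (a hyperbolic automorphism has no nontrivial finite orbits on infinite-order elements, so $d_H(1,\varphi_q^n(h))\to\infty$, and $d_G\ge$ a constant multiple of word length under a fixed generating set comparison). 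With all hypotheses verified, Theorem \ref{thm:no-ct-map} yields part (2).

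For part (1), the type-preserving statement, I would show directly that every Morse element $g\in H$ remains Morse in $G$. The key structural fact is that $H=H_0\ast\mathbb Z^2$ and a Morse element of a free product is conjugate to one not supported in a single free factor equal to the non-hyperbolic $\mathbb Z^2$ (indeed $\mathbb Z^2$ contributes no Morse directions); so a Morse element is, up to conjugacy, either in $H_0$ or genuinely uses the free product structure, and in either case its axis is Morse in $H$. To promote Morseness from $H$ to $G=H\rtimes_\varphi\mathbb Z$, I would argue that $H$ is undistorted in $G$ and that the axis of $g$ remains Morse in $G$: here the crucial point is that $\varphi$ fixes $\mathbb Z^2$, so the distortion only affects the $H_0$ part, and a quasigeodesic ray fellow-travelling the axis of a Morse element of $H$ cannot be ``shortcut'' through the $\mathbb Z$ direction without passing through the non-Morse $\mathbb Z^2$ flats, which a Morse axis avoids.

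The main obstacle I expect is precisely part (1): verifying type-preservation requires controlling how Morse geodesics of $H$ behave inside $G$ under the distortion introduced by $\varphi$, and ruling out that any Morse direction of $H$ gets ``flattened'' in $G$. Establishing the uniform Morse gauge for the family $\{\varphi_q^n(h)\}$ in $H$ (needed for part (2)) and the careful comparison of word metrics $d_H$ versus $d_G$ to get divergence are more routine but still need care; the real work is the geometric argument that Morseness survives the extension, which likely requires either a retraction/undistortion argument for $H<G$ together with a characterization of Morse geodesics in this specific free-product-by-$\mathbb Z$, or an appeal to a known criterion for Morse elements in mapping-torus-type groups.
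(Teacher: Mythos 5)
Your treatment of part (2) is essentially the paper's: apply Theorem \ref{thm:no-ct-map} with $q$ a generator of $\mathbb Z$, $x$ a nontrivial element of the $\mathbb Z^2$ factor, and $h$ an infinite-order element of $H_0$. Your verification that $\{\varphi^n(h)\}_n$ is uniformly weakly Morse in $H$ (because $H_0$ is a hyperbolic free factor) and that $d_G(1,\varphi^n(h))\to\infty$ (eventual pairwise distinctness plus local finiteness of the Cayley graph) correctly fills in details the paper leaves implicit --- though you should drop the aside comparing $d_G$ to word length in $H$ by a multiplicative constant, since $H$ is exponentially distorted in $G$ here.

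Part (1) is where there is a genuine gap. Your argument rests on the assertion that $H$ is undistorted in $G$ and that a Morse axis of $H$ ``cannot be shortcut through the $\mathbb Z$ direction.'' The first assertion is false: because $\varphi|_{H_0}$ is a hyperbolic automorphism, $H_0$ (hence $H$) is exponentially distorted in $G$, and geodesics of $H$ lying in $H_0$ are exactly the ones that \emph{do} get shortcut by conjugating with $t$ --- this is precisely the mechanism driving part (2), so your heuristic, taken seriously, would prove too much and contradict the non-existence statement. What saves part (1) is that it concerns Morse \emph{elements}, i.e.\ the cyclic subgroups $\langle g\rangle$, not arbitrary Morse geodesics, and the paper establishes it by a structural argument absent from your sketch: $H_0\rtimes_{\varphi|_{H_0}}\mathbb Z$ is hyperbolic by Bestvina--Feighn, $G$ decomposes as an amalgam of this group with the $\mathbb Z^3$ mapping torus of $\mathbb Z^2$ over $\langle t\rangle$, and Dahmani's combination theorem makes $G$ hyperbolic relative to that $\mathbb Z^3$; Morse elements of $H$ (resp.\ $G$) are then exactly the infinite-order elements not conjugate into $\mathbb Z^2$ (resp.\ $\mathbb Z^3$), by Osin's result on elementary closures together with strong quasiconvexity of peripherals, and these two conditions match up. You correctly flagged that part (1) would need ``a known criterion for Morse elements in mapping-torus-type groups''; that criterion is the relative hyperbolicity of $G$, which your proposal does not supply.
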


\begin{proof}
    Non-existence of the Cannon-Thurston map follows directly from Theorem \ref{thm:no-ct-map}, so we have to show that Morse elements of $H$ are still Morse in $G$.

    First of all, $H$ is hyperbolic relative to $\mathbb Z^2$, and therefore we claim:
    
    \begin{claim}
        An element $h$ of $H$ is Morse if and only if it has infinite order and is not conjugate into $\mathbb Z^2$.
    \end{claim}

    \begin{proof}
        Morse elements in any group have infinite order and cannot be contained in $\mathbb Z^2$ subgroups, see e.g. \cite[Lemma 3.8]{CalvezWiest}, so we only need to show the ``if'' part. By \cite[Corollary 1.7]{Osin:elementary}, we have that $h$ is contained in an elementary subgroup $E(h)$ with the property that $G$ is hyperbolic relative to $\{\Z^2,E(g)\}$. Therefore, $E(g)$ is a peripheral subgroup of a relatively hyperbolic structure on $H$, and peripheral subgroups are strongly quasiconvex by \cite[Lemma 4.15]{DrutuSapir}. This readily implies Morseness of $h$ (since $\langle h\rangle$ and $E(h)$ lie at finite Hausdorff distance).
    \end{proof}
    
    Since $\psi=\varphi|_{H_0}$ is hyperbolic, we have that $G_0=H \rtimes_{\psi} \mathbb Z$ is hyperbolic by \cite{BestvinaFeighn}. Using the decomposition $G=G_0\ast_{\langle t\rangle} \mathbb Z^3$ and the combination theorem for relative hyperbolicity \cite{Dahmani:combination}, we then see that $G_0$ is hyperbolic relative to the mapping torus $\mathbb Z^3$ of $\mathbb Z^2<H$. If we have a Morse element $h$ of $H$, in $G$ it is not conjugate into $\mathbb Z^3$, and therefore it is Morse in $G$ (again by \cite{Osin:elementary,DrutuSapir}), concluding the proof.
\end{proof}

\begin{remark} 
    We can compare this with the case when $H$ is a free group and $\varphi$ is the atoroidal automorphism defined by $a \mapsto aba$ and $b \mapsto ba$. In this case the Morse Cannon--Thurston map does not exist \cite[Proposition 5.5]{CAT0IFP}. A standard calculation shows that $[a,b]$ commutes with $\langle t \rangle$ in $G = F_2 \rtimes \langle t \rangle$. So here, the map is not type preserving, and that is where the failure of the Cannon-Thurston map is exhibited. In fact, all examples covered by \cite{CAT0IFP} fail to be type preserving.
\end{remark}

\section{A mapping torus of a RAAG with a Cannon-Thurston map}
\label{sec:CT}
Let $H = \langle a,b,c,d \mid [a,b], [b,c] [c,d] \rangle$,  and $\varphi \in \mathrm{Aut}(H)$ defined by $a \mapsto ab,\ 
    b \mapsto b, \
    c \mapsto c, \
    d \mapsto d.
$
Let $G = H \rtimes_{\varphi} \mathbb Z$ and observe that $H$ is distorted in $G$ since, for example, $a^nb^n=ta^nt^{-1}$.

\begin{theorem} \label{thm:CT}
    There exists a continuous $H$--equivariant map $f :\mb  H \to \mb G$. Moreover, if $\{h_n\in H\}\subseteq H$ and we have $h_n\to h_\infty\in \mb H$, then $h_n\to f(h_\infty)\in \mb G$.
\end{theorem}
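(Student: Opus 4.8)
The plan is to construct $f$ by showing that Morse geodesics in $H$ become uniformly Morse quasi-geodesics in $G$, and then to invoke the standard functoriality of the Morse boundary. Concretely, I would isolate the following \emph{Proposition}: for every Morse gauge $M$ there are constants $K,C$ and a Morse gauge $M'$ such that every $M$--Morse geodesic ray in $H$, viewed as a path in $G$, is a $(K,C)$--quasi-geodesic that is $M'$--Morse in $G$. Granting this, one defines $f(h_\infty)$ to be the endpoint in $\mb G$ of the image of any $M$--Morse geodesic ray representing $h_\infty$; uniform control on each stratum $\mb^M H$ makes $f$ well defined and continuous, $H$--equivariance is automatic since the whole construction is $H$--equivariant, and the ``moreover'' clause follows because for group elements $h_n\to h_\infty$ the $H$--geodesics $[1,h_n]$ are uniformly $M$--Morse, hence uniform Morse quasi-geodesics in $G$, so the $G$--geodesics $[1,h_n]$ fellow-travel them and converge to $f(h_\infty)$. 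The remaining bookkeeping (well-definedness, compactness of refined strata, direct-limit topology) is then routine, using \cite{cordes,cordeshume,cordesdurham,cordessistozbinden}.

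The first step is to pin down the Morse directions of $H$. Using the standard description of Morse/contracting elements in RAAGs, a cyclically reduced element is Morse precisely when its support contains both $a$ and $d$: such a support has empty link, since $\mathrm{lk}(a)\cap \mathrm{lk}(d)=\{b\}\cap\{c\}=\emptyset$, and does not span a join, whereas every support missing $a$ (resp. $d$) lies in the join subgraph on $\{b,c,d\}$ (resp. $\{a,b,c\}$), hence is conjugate into a product and is not Morse. The crucial consequence I would extract is that an $M$--Morse geodesic has uniformly bounded intersection with the flats of $H$, and in particular with the $\langle a,b\rangle$--flats, which is exactly where the distortion of $H$ in $G$ is concentrated, recall $a^nb^{nk}=t^ka^nt^{-k}$.

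The heart of the argument, and the step I expect to be the main obstacle, is the distortion lemma: $M$--Morse segments are undistorted, i.e. $d_H(1,g)\le K\,d_G(1,g)+C$ whenever $[1,g]$ is $M$--Morse (together with the trivial bound $d_G\le d_H$ this gives the quasi-geodesic constants). The natural approach is to take a $G$--geodesic word $W$ for $g\in H$, push every occurrence of $t^{\pm1}$ to the right using $tx=\varphi(x)t$, and cancel them, as the total $t$--exponent is $0$. This rewrites $g$ as an $H$--word in which each letter $x$ sitting at running $t$--height $m$ is replaced by $\varphi^{m}(x)$, so that each $a^{\pm1}$ picks up a power of $b$ of exponent $\pm m$ while $b,c,d$ are unchanged. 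A priori this yields only the quadratic bound $d_H(1,g)\le \ell+\ell^2$ where $\ell=d_G(1,g)$; the content is that all inserted powers of $b$ are \emph{flat}, as $b$ commutes with the neighbouring $a$'s, so they can survive in a reduced word only up to the $\langle a,b\rangle$--flat content of $g$, which is bounded in terms of $M$ by the previous paragraph. Turning this heuristic into a clean estimate, namely tracking which inserted $b$'s cancel and bounding the total compression by the Morse-controlled flat content, is the technical crux of the proof.

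Finally I would check that the resulting $(K,C)$--quasi-geodesics are $M'$--Morse in $G$. Here one analyses the flats of $G$: every maximal abelian subgroup is either $\langle a,b\rangle$ or lies inside $\langle b,c,d\rangle\times\langle t\rangle$, and no flat contains $a$ together with $t$, since $\langle a,b,t\rangle$ is a Heisenberg group and not $\mathbb{Z}^3$, nor $a$ together with $c$ or $d$. Since $t$ commutes with $b,c,d$ but not with $a$, and an image ray keeps using the letter $a$ with uniformly bounded gaps (again by bounded flat content), it cannot run alongside any flat of $G$ for long. I would convert this into strong contraction in $G$, hence $M'$--Morseness with $M'$ depending only on $M$. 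With the Proposition established, the construction and properties of $f$ asserted in the theorem follow as outlined in the first paragraph.
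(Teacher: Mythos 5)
Your outer skeleton (a uniformly stable quasi-isometric embedding of each stratum $H^{(M)}\hookrightarrow G$, then functoriality of the Morse boundary plus a direct-limit continuity criterion) matches the paper exactly, and your characterization of the Morse elements of $H$ is correct. But the core Proposition is where the work lives, and your route to it has two genuine gaps. First, the undistortion estimate: you yourself flag that turning the ``push the $t$'s to the right and bound the inserted powers of $b$ by the Morse-controlled flat content'' heuristic into an actual inequality is the technical crux, and you do not supply it. Tracking cancellation of the inserted $b$'s against a $G$-geodesic word is delicate precisely because a $G$-geodesic for an element of $H$ need not respect the normal form of $H$ at all, so ``flat content of $g$'' does not obviously control the compression. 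Second, and more seriously, the final step -- deducing that the image quasi-geodesics are $M'$-Morse in $G$ from the fact that they avoid flats of $G$ -- rests on a principle (``bounded interaction with abelian subgroups implies strong contraction'') that is valid in CAT(0) or hierarchically hyperbolic settings but has no justification here: $G$ contains the Heisenberg group $\langle a,b,t\rangle$ (as you note), so $G$ is not CAT(0), it has distorted abelian subgroups, and enumerating maximal abelian subgroups does not by itself yield any contraction statement. As written, this step is an assertion, not an argument.

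The paper sidesteps both difficulties with a single structural observation: the splittings $H=\langle a,b,c\rangle\ast_{\langle b,c\rangle}\langle b,c,d\rangle$ and $G=\langle a,b,c,t\rangle\ast_{\langle b,c,t\rangle}\langle b,c,d,t\rangle$ have equivariantly isometric Bass--Serre trees $T\cong T'$, the stratum $H^{(M)}$ quasi-isometrically embeds into $T$ (a known fact about Morse directions of this RAAG), and the $G$-action on $T'$ is acylindrical (checked via the Heisenberg computation $a^{-n}ta^n=b^nt$). Undistortion then falls out because the orbit map $G\to T'$ is coarsely Lipschitz and the composite $H^{(M)}\to T'$ is already a quasi-isometric embedding; Morseness in $G$ falls out because geodesics of $G$ whose projection to an acylindrical tree is a quasi-geodesic are uniformly Morse. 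If you want to salvage your direct combinatorial approach, you would need to either carry out the cancellation bookkeeping in full and then prove a contraction criterion for $G$ from scratch, or -- more realistically -- replace your last paragraph with an appeal to some auxiliary hyperbolic space on which $G$ acts acylindrically, which is exactly what the Bass--Serre tree provides.
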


Both $G$ and $H$ have useful graph of group decompositions:
$H = \langle a,b,c \rangle \ast_{\langle b, c \rangle}\langle b,c,d \rangle$ and $G = \langle a,b,c, t \rangle \ast_{\langle b, c, t \rangle}\langle b,c,d, t \rangle$. Let $T$ be the Bass--Serre tree associated to this decomposition of $H$ and let $T'$ be the Bass--Serre tree associated to this decomposition of $G$.

\begin{lemma} \label{isom embedding}
    There exists a $G$-equivariant isometry $T \to T'$.
\end{lemma}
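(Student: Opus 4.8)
The plan is to write down the obvious coset map and verify it is a bijective simplicial map, since a bijective simplicial map between trees is automatically an isometry. Write $A_0=\langle a,b,c\rangle$, $B_0=\langle b,c,d\rangle$, $C_0=\langle b,c\rangle$ for the vertex and edge groups of the splitting of $H$, and $A_0'=\langle a,b,c,t\rangle$, $B_0'=\langle b,c,d,t\rangle$, $C_0'=\langle b,c,t\rangle$ for those of $G$. The vertices of $T$ are the cosets $gA_0$, $gB_0$ and the edges are the cosets $gC_0$, with $g\in H$, and similarly for $T'$ with $g'\in G$. I would define $f$ by $gA_0\mapsto gA_0'$, $gB_0\mapsto gB_0'$, $gC_0\mapsto gC_0'$ for $g\in H$. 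Because $A_0\subseteq A_0'$, $B_0\subseteq B_0'$ and $C_0\subseteq C_0'$, this is well defined; it sends the edge $gC_0$ (joining $gA_0$ and $gB_0$) to the edge $gC_0'$ (joining $gA_0'$ and $gB_0'$), so it is simplicial and incidence-preserving; and it is manifestly $H$--equivariant for left multiplication. It remains only to show $f$ is a bijection on vertices and on edges.

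The heart of the argument is the identity $A_0'\cap H=A_0$, together with $B_0'\cap H=B_0$ and $C_0'\cap H=C_0$. The key structural input is that $\varphi$ preserves each of $A_0,B_0,C_0$: indeed $\varphi(a)=ab$, $\varphi(b)=b$, $\varphi(c)=c$ again generate $A_0$ (as $ab\cdot b^{-1}=a$), while $\varphi$ fixes $b,c,d$ pointwise, so it restricts to the identity on $B_0$ and $C_0$. Hence $t$ normalises each of these subgroups inside $G$, giving $A_0'=A_0\rtimes\langle t\rangle$, $B_0'=B_0\times\langle t\rangle$ and $C_0'=C_0\times\langle t\rangle$. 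Now compose with the retraction $\pi\colon G=H\rtimes\langle t\rangle\to\langle t\rangle$ with kernel $H$: every element of $A_0'$ has the form $wt^n$ with $w\in A_0$, and $\pi(wt^n)=t^n$, so $wt^n\in H$ exactly when $n=0$, i.e.\ exactly when $wt^n\in A_0$. The same computation gives the analogous identities for $B_0'$ and $C_0'$.

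Granting these intersections, injectivity is immediate: if $gA_0'=g'A_0'$ with $g,g'\in H$, then $g^{-1}g'\in A_0'\cap H=A_0$, whence $gA_0=g'A_0$, and likewise for the $B_0$--vertices and for edges. For surjectivity I would use that $t$ lies in each of $A_0',B_0',C_0'$: writing an arbitrary $g'\in G$ as $g'=ht^n$ with $h\in H$, we get $g'A_0'=ht^nA_0'=hA_0'=f(hA_0)$, and similarly on the other vertex type and on the edges. Thus $f$ is a bijection on vertices and on edges preserving incidence, hence a simplicial isomorphism $T\to T'$, and therefore an equivariant isometry.

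I expect the only genuine content to be the intersection identity $A_0'\cap H=A_0$ and its analogues, which is precisely where the specific form of $\varphi$---namely that it preserves the vertex and edge groups of the splitting of $H$---is used; everything else is formal coset bookkeeping. A minor point worth confirming first is that the displayed splitting of $G$ really is the mapping-torus splitting induced by this $\varphi$--invariant splitting of $H$, so that $T'$ is indeed the Bass--Serre tree of $A_0'\ast_{C_0'}B_0'$ with the groups identified as above.
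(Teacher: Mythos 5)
Your proposal is correct and follows essentially the same route as the paper: the same coset map defined by the natural inclusions of vertex and edge groups, with surjectivity obtained by writing an arbitrary $g\in G$ as $ht^{n}$ with $h\in H$. The only difference is that you spell out injectivity via the intersection identities $A_0'\cap H=A_0$ (etc.), using $\varphi$-invariance of the vertex and edge groups and the retraction $G\to\langle t\rangle$, a detail the paper leaves implicit in the phrase ``it follows from the definition of this map that this is an equivariant isometric embedding.''
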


\begin{proof}
    Recall that vertices (and edges) of a Bass--Serre tree are cosets of vertex (and edge) groups. Let $\iota \colon T \to T'$ be defined by the natural inclusions: $\langle a,b,c \rangle \hookrightarrow \langle a,b,c, t \rangle$, $\langle b,c \rangle \hookrightarrow \langle b,c,t \rangle$, and $\langle b,c,d \rangle \hookrightarrow \langle b,c,d, t \rangle$.

    It follows from the definition of this map that this is an equivariant isometric embedding.

    We now need to show the map is surjective. First, note that given an element $g \in G$, one can always use the the relations $txt^{-1}=x$ and $t^{-1}xt=x$ for $x \in \{b,c,d\}$ and $t a t^{-1}= ab$ to bubble all of the occurrences of $t^{\pm1}$'s in $g$ to the end of $g$ so that $g= h \cdot t^{\pm n}$ in $G$  for some $n\in  \mathbb{Z}_{\geq 0}$ and some $h \in H$. So given a coset $g\langle a,b,c,t\rangle$ (or $g\langle b,c,d, t\rangle$), it follows that the coset $h \langle a,b,c \rangle$ (or $h \langle b,c,d \rangle$) is mapped to $g\langle a,b,c,t\rangle$ (or $g\langle b,c,d, t\rangle$).
\end{proof}

We will use the previous lemma to prove that $G$ acts acylindrically on $T'$

\begin{lemma}
\label{lem:acyl}
    The action of $G$ on $T'$ is acylindrical. 
\end{lemma}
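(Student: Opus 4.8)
The plan is to verify the standard criterion for acylindricity of an action on a tree: it suffices to produce constants $L$ and $N$ so that the pointwise stabiliser of every geodesic segment of length at least $L$ in $T'$ has at most $N$ elements. Since the pointwise stabiliser of a segment is the intersection of the stabilisers of its edges, and since by the amalgam decomposition $G = \langle a,b,c,t\rangle \ast_{\langle b,c,t\rangle} \langle b,c,d,t\rangle$ every edge stabiliser is a conjugate of the edge group $E = \langle b,c,t\rangle \cong \mathbb{Z}^3$, the task reduces to showing that the intersection $\bigcap_{i=0}^{L}\mathrm{Stab}(e_i)$ of the stabilisers of $L+1$ consecutive edges along a geodesic is uniformly bounded. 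I would use Lemma \ref{isom embedding} to pass freely between $T$ and $T'$ and to describe the vertices and edges of $T'$ concretely as cosets of the vertex groups $A = \langle a,b,c,t\rangle$, $B = \langle b,c,d,t\rangle$ and of the edge group $E$.

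First I would carry out the local computation at a single vertex: for two distinct edges $e,e'$ meeting at a vertex $v$, the intersection $\mathrm{Stab}(e)\cap\mathrm{Stab}(e')$ is a conjugate, inside the vertex group, of $E\cap gEg^{-1}$, where $g$ records the turn from $e$ to $e'$. Here the two vertex types behave very differently. At a $B$--vertex we have $B = \langle b,c,d\rangle\times\langle t\rangle$ with $t$ central, so the $t$--direction survives every turn, while the $\langle b,c\rangle\cong\mathbb{Z}^2$ part is cut down by the centraliser structure of the right-angled Artin group $\langle b,c,d\rangle$ on the path $b\text{--}c\text{--}d$. At an $A$--vertex, by contrast, $t$ acts through the transvection $\psi\colon a\mapsto ab$, so conjugating $t$ by a nontrivial turn moves it off $E$ and cuts the $t$--direction. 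The plan is then to combine these two mechanisms along a geodesic, which alternates between $A$-- and $B$--vertices, and to argue that after boundedly many edges both the $\langle b,c\rangle$--direction and the $t$--direction have been destroyed, leaving a finite intersection of bounded size.

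The main obstacle is precisely the interaction of the infinite edge group with these two mechanisms. The element $t$ is fixed by every turn at the $B$--vertices, so it can only be eliminated at the $A$--vertices, and there it is eliminated only by turns $v\in\langle a,b,c\rangle$ with $v^{-1}\psi(v)\notin\langle b,c\rangle$. I would therefore need to understand the fixed subgroup of $\psi$ and, more generally, the set of $v$ with $v^{-1}\psi(v)\in\langle b,c\rangle$, since turns of this form fail to cut the $t$--direction; this is where the geometry of the right-angled Artin group on the path $a\text{--}b\text{--}c$ (centralisers and normalisers of the parabolic $\langle b,c\rangle$, together with the conjugacy behaviour of the transvection) does the real work. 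Showing that these $\psi$--fixed directions cannot accumulate along a long geodesic, and hence that the pointwise stabiliser of a sufficiently long segment is uniformly bounded, is the crux of the argument and the step where I expect the bulk of the effort to lie; once it is established, acylindricity follows from the criterion stated at the outset.
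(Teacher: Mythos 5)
Your reduction to bounding pointwise stabilisers of long segments, and your analysis of the two vertex types (with $t$ central in $\langle b,c,d,t\rangle$, so that the $t$--direction can only be cut at $\langle a,b,c,t\rangle$--vertices, and only by turns $v$ with $v^{-1}\psi(v)\notin\langle b,c\rangle$), is the right framing and matches the paper's overall strategy. But the proposal stops exactly where the proof has to start: you never show that the $t$--direction is actually destroyed after boundedly many edges. You correctly flag this as the crux and as the place where the bulk of the effort lies, but you give no argument for it, so what you have is a roadmap rather than a proof of the lemma. Moreover, the deferred step is genuinely nontrivial and cannot be dispatched by a generic ``distinct conjugates intersect small'' remark: writing $E=\langle b,c,t\rangle$ for the edge group, the turn $v=a^n$ at an $\langle a,b,c,t\rangle$--vertex satisfies $v^{-1}\psi(v)=b^n\in\langle b,c\rangle$, equivalently $a^{-n}ta^n=b^nt$, and since $b\in E$ this gives $t\in E\cap a^{-n}Ea^{n}$ for every $n$. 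So a single turn at an $A$--vertex need not kill $t$, and one must say something concrete about which turns preserve the $t$--direction and how the two vertex types combine along the geodesic.

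The paper closes this gap with an explicit computation. It reduces the claim to controlling intersections of distinct conjugates of $\langle b,c,t\rangle$ inside $\langle a,b,c,t\rangle$: the observation $aca^{-1}\notin\langle c\rangle$ handles the $c$--direction, and for the $t$--direction it passes to the further splitting $\langle a,b,c,t\rangle=\langle a,b,t\rangle\ast_{\langle b,t\rangle}\langle b,c,t\rangle$, where $\langle a,b,t\rangle$ is the Heisenberg group with centre $\langle b\rangle$ and normal form $a^nt^mb^k$, so that the identity $a^{-n}ta^n=b^nt$ pins down exactly where conjugates of $\langle t\rangle$ can land. Acylindricity then follows because the surviving subgroup $\langle b\rangle$ fixes only a bounded subtree of $T'$ (via Lemma \ref{isom embedding}). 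To turn your proposal into a proof you would need to supply an argument of at least this level of explicitness for the step you deferred, for instance by carrying out the normal-form computation in $\langle a,b,c,t\rangle$ or by showing directly that the fixed subtree of $t$ (and of the other surviving directions) is uniformly bounded.
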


\begin{proof}
    We wish to show that the stabilizer of any long path in $T'$ only stabilizes a bounded subtree. Thus we wish to understand the intersection of any two conjugates of $\langle b, c,t \rangle$ in $\langle a,b,c,t \rangle$ (or similarly  any two conjugates of $\langle b, c,t \rangle$ in $\langle b,c,d,t \rangle$). We claim that $\langle b \rangle$ is the only subgroup in the intersection of any two conjugates. Assuming the claim, the acylindricity of the action holds because $\langle b \rangle$ only stabilizes a bounded subtree of $T$ and by Lemma \ref{isom embedding}, also only stabilizes a bounded subtree of $T'$.

    First note that $a c a^{-1} \not \subset \langle c \rangle$, so to prove the claim, it is enough to show that the intersection of any two conjugates of $\langle b, c,t \rangle$ in $\langle a,b,c,t \rangle$ do not contain $\langle t \rangle$. 

    To see this, we consider another graph of groups decomposition of the subgroup $\langle a,b,c,t \rangle < G$: 
        \[\langle a,b,c,t \rangle =\langle a, b, t \rangle \ast_{\langle b,t \rangle} \langle b,c,t \rangle.\]
    We note that $\langle a, b, t\rangle$ is isomorphic to the Heisenberg group where $b$ generates the center. Since any element of $\langle a, b, t\rangle$ can be written as $a^n t^m b^ k$ for some integers $n,m,k$, to show that the intersection of any two conjugates of $\langle b, c,t \rangle$ do not contain $\langle t \rangle$ we just need check that $a^{-n} t a^n \not \subset \langle t \rangle$. A quick calculation shows that $a^{-n} t a^n = b^nt$ and finishes the proof.
\end{proof}

Fix word metrics on $H$ and $G$. Given a Morse gauge $M$, let $H^{(M)}$ be the set of all $h\in H$ such that a geodesic $[1,h]$ is $M$-Morse, and we will use a similar notation for $G$. Recall that a map $f:X\to Y$ between metric spaces is $M$-\emph{stable}, for $M$ a Morse gauge, if for all $x_1,x_2\in X$ some geodesic $[f(x_1),f(x_2)]$ is $M$-Morse.

The following is the key result to prove the existence of a Cannon--Thurston map. It tells us that the Morse directions in $H$ are undistorted and Morse in $G$. 

\begin{prop}
\label{prop:qi_stable}
    For every Morse gauge $M$ there exist a Morse gauge $M'$ and a constant $K$ such that the inclusion $\iota_M:H^{(M)}\to G$ is a $M'$-stable $(K,K)$--quasi-isometric embedding.
\end{prop}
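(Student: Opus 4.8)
The plan is to push everything down to the Bass--Serre trees, using Lemma \ref{isom embedding} to identify $T$ with $T'$ and Lemma \ref{lem:acyl} to detect Morse directions in $G$, and to begin by reducing the statement to a single-element estimate. Write $o_H\colon H\to T$ and $o_G\colon G\to T'$ for the orbit maps of a common base vertex $v_0$; equivariance together with Lemma \ref{isom embedding} identifies $o_G|_H$ with $o_H$, so under the identification $T\cong T'$ one has $d_{T'}(v_0,hv_0)=d_T(v_0,hv_0)$ for $h\in H$. By left-invariance of both word metrics and the fact that in a geodesic triangle two Morse sides force the third to be Morse, quantitatively (\cite[Lemma 2.3]{cordes}), for $x_1,x_2\in H^{(M)}$ the element $x_1^{-1}x_2$ lies in $H^{(N)}$ for a gauge $N=N(M)$; moreover $[x_1,x_2]_G=x_1\cdot[1,x_1^{-1}x_2]_G$ and left translation preserves both metrics and all Morse gauges. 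Hence both conclusions follow once I prove the following: for every Morse gauge $N$ there are $M'$ and $K_0$ so that every $h\in H^{(N)}$ satisfies $\tfrac1{K_0}d_H(1,h)-K_0\le d_G(1,h)\le d_H(1,h)$ and $[1,h]_G$ is $M'$--Morse.

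For the distance comparison I would analyse an $N$--Morse geodesic $\gamma=[1,h]_H$ through the decomposition $H=\langle a,b,c\rangle\ast_{\langle b,c\rangle}\langle b,c,d\rangle$, using two structural facts. First, the edge group $\langle b,c\rangle$ is the special subgroup on $\{b,c\}$, so its cosets are convex in $\mathrm{Cay}(H,S)$; this prevents $\gamma$ from crossing an edge of $T$ and crossing back (the two transition points would lie in a single convex edge-coset, forcing the subgeodesic between them to remain in that coset), so $o_H\circ\gamma$ makes monotone, hence linear, progress in $T$. Second, each vertex group is a direct product, $\langle a,b,c\rangle\cong F_2\times\Z$ and $\langle b,c,d\rangle\cong F_2\times\Z$ (with $b$, resp.\ $c$, central), and the edge group is $\Z^2$; in such a convex, flat-containing subgroup any geodesic segment of length $\ell$ can be replaced by a uniform quasi-geodesic that detours distance $\sim\ell$ into the central $\Z$, so no segment longer than some $B(N)$ can be $N$--Morse. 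Since subsegments of $\gamma$ are $N$--Morse, $\gamma$ spends at most $B(N)$ along each vertex coset it meets, and combining this with the monotone tree progress yields $d_H(1,h)\asymp_N d_T(v_0,hv_0)$.

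Transferring to $G$ is then immediate for the quasi-isometric embedding: the $G$--orbit map is coarsely Lipschitz, so $d_{T'}(v_0,hv_0)\lesssim d_G(1,h)$, while $d_G(1,h)\le d_H(1,h)\asymp_N d_T(v_0,hv_0)=d_{T'}(v_0,hv_0)$; all these quantities are comparable, which gives the $(K_0,K_0)$--quasi-isometric embedding. The same comparison, applied to subsegments, shows that $\gamma$ read as a path in $G$ is a uniform $G$--quasi-geodesic that shadows a geodesic of $T'$ with vertex-coset excursions bounded by $B(N)$.

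The hard part will be the $M'$--Morse conclusion in $G$, and this is exactly where acylindricity enters. Given any $(K,C)$--quasi-geodesic $\lambda$ of $G$ with endpoints on $\gamma$, its $T'$--projection is a quasi-geodesic and hence, $T'$ being a tree, stays uniformly close to the tree-geodesic shadowed by $\gamma$; the remaining task is to upgrade this tree-fellow-travelling to genuine fellow-travelling in $G$. Tree-closeness alone is weak, since vertex stabilisers are infinite, so here I would invoke Lemma \ref{lem:acyl}: because $G\acts T'$ is acylindrical, an element that almost fixes a long segment of $T'$ is uniformly bounded, and combined with the bounded vertex-coset penetration of $\gamma$ this converts tree-closeness into a uniform bound on $d_G(\lambda,\gamma)$, yielding a uniform Morse gauge $M'$ for $\gamma$ in $G$; finally the genuine geodesic $[1,h]_G$ stays within bounded distance of the uniform quasi-geodesic $\gamma$ and inherits a quantitatively controlled Morse gauge. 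I expect this acylindrical upgrade---controlling the vertical displacement along a long shadowed segment---to be the main technical point, and it is the step most naturally handled by the graph-of-groups machinery of \cite{zbinden:graphOfGroups}.
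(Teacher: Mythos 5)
Your proposal follows the same route as the paper's: reduce everything to the Bass--Serre tree via Lemma \ref{isom embedding}, obtain the quasi-isometric embedding from the fact that $H^{(M)}\to T$ is a quasi-isometric embedding combined with coarse Lipschitzness of the orbit map $G\to T'$, and obtain stability from acylindricity (Lemma \ref{lem:acyl}). The only real difference is that the paper outsources the two nontrivial ingredients to \cite[Lemmas 4.5 and 4.6]{cordescharneysisto}, whereas you sketch proofs of both in this special case; your sketch of the first (no backtracking in $T$ from convexity of the edge cosets, plus bounded vertex-coset penetration of $M$--Morse geodesics because the vertex groups are $F_2\times\Z$) is sound, and the initial reduction to a single-element estimate via \cite[Lemma 2.3]{cordes} is fine. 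One claim in your stability step is wrong as stated: the $T'$--projection of a $(K,C)$--quasi-geodesic $\lambda$ of $G$ is \emph{not} a quasi-geodesic of $T'$, since the orbit map collapses the unbounded vertex stabilizers, so the projection need not stay uniformly close to the tree-geodesic and can make arbitrarily long excursions. What is true, and what the acylindricity criterion actually uses, is that the projection is a coarsely connected path in a tree and therefore must pass uniformly close to \emph{every} point of the tree-geodesic joining its endpoints; the remaining upgrade from this tree statement to a bound on $d_G(\lambda,\gamma)$ is, as you acknowledge, the main technical point, and it is precisely the content of \cite[Lemma 4.6]{cordescharneysisto} that the paper cites rather than reproves.
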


\begin{proof}
    We have that $H^{(M)}$ quasi-isometrically embeds into $T$ by \cite[Lemma 4.5]{cordescharneysisto}. Therefore, in view of Lemma \ref{isom embedding}, we have that the composition of $\iota_M$ and an orbit map $\pi$ of $G$ to $T'$ is a quasi-isometric embedding. Since $\pi$ is coarsely Lipschitz, $\iota_M$ needs to be a quasi-isometric embedding. Stability follows from acylindricity of the action, Lemma \ref{lem:acyl}, which ensures that geodesics $\gamma$ in $G$ with the property that $\pi\circ\gamma$ is a quasi-geodesic are Morse (with Morse gauge controlled in terms of the hyperbolicity constant); this is\cite[Lemma 4.6]{cordescharneysisto} (see also \cite{DrutuMozesSapir,sisto}). 
\end{proof}

In order to show that the inclusion $H \hookrightarrow H \rtimes \mathbb{Z}$ induces a continuous map on the Morse boundaries, we shall use the following lemma, which is a consequence of the universal property of direct limits.

\begin{lemma}
\label{lem:criterion_continuity}
    Let $\{X_i\}$ and $\{Y_j\}$ be two sequences of nested topological spaces. Let 
        \[X=\mathop{\lim_{\longrightarrow}}_{i}X_i \quad \text{and} \quad  Y=\mathop{\lim_{\longrightarrow}}_{j}Y_j\] 
    be the direct limits of  $\{X_i\}$ and $\{Y_j\}$. A map $f:X \to Y$ is continuous if the following hold:
    \begin{itemize}
        \item for each $i$ there exists a $j$ such that $f(X_i) \subseteq Y_j$,
        \item $f\vert_{X_i}\to Y_j$ is a continuous map.
    \end{itemize}
\end{lemma}

\begin{proof}[Proof of Theorem \ref{thm:CT}]
    By Proposition \ref{prop:qi_stable}, and the fact that stable quasi-isometric embeddings induce maps at the level of Morse boundaries (see \cite{cordeshume}), for any Morse gauge $M$ there exists a Morse gauge $M'$ and a continuous map $f^M:\mb^M H=\partial H^{(M)}\to \mb^{M'} G$ with the property that if $\{h_n\}\subseteq H^{(M)}$ and $h_n\to p\in \mb^MH$, then $h_n\to f^M(p)\in \mb^{M'}G$; this property uniquely determines $f^M$.

    By Lemma \ref{lem:criterion_continuity} there exists a continuous map $f:\mb H\to \mb G$ which restricts to $f^M$ on $\mb^MH$, for all Morse gauges $M$. This map satisfies the ``moreover'' part by construction, and it is readily seen that this necessarily makes it equivariant.
\end{proof}

\bibliography{main}
\bibliographystyle{alpha}
\end{document}